\documentclass[11pt,twoside]{article}

\newcommand{\HeadTitle}{Cotangent Coefficients for Mittag--Leffler Polynomials}

\newcommand{\HeadTitleTwo}{\begin{center}
\Large{\textit{From Umbral Hyperbolic Integrals to a Cotangent Coefficient Formula for the Mittag--Leffler Polynomials}}
\end{center}}

\usepackage[margin=0.8in]{geometry}
\usepackage{amsmath, amssymb, amsfonts, mathtools}
\usepackage{amsthm}
\usepackage{mathrsfs}
\usepackage{stmaryrd}
\usepackage{esint}
\usepackage{eqnalign}

\usepackage{graphicx}
\usepackage{enumitem}
\usepackage[most]{tcolorbox}
\usepackage{float}
\usepackage{listings}
\usepackage{array}
\usepackage{booktabs}
\usepackage{xcolor}
\usepackage{emptypage}
\usepackage{tensor}
\usepackage{tabularx}

\usepackage{fancyhdr}
\usepackage{hyperref}
\usepackage[nameinlink,noabbrev]{cleveref}

\usepackage{titlesec}
\usepackage[T1]{fontenc}
\usepackage[utf8]{inputenc}
\usepackage{lmodern}
\usepackage{titling}

\usepackage[backend=biber,style=numeric]{biblatex}
\providecommand{\HeadTitle}{} 
\providecommand{\HeadTitleTwo}{} 
\providecommand{\HeadAuthor}{Luc Ramsès TALLA WAFFO} 

\titleformat{\section}[block]
  {\normalfont\large\bfseries\itshape\centering}
  {§\thesection.}
  {1em}
  {}

\titleformat{\subsection}
  {\normalfont\normalsize\bfseries}
  {\thesubsection}
  {1em}
  {}

\newtheorem{theorem}{Theorem}[section]
\newtheorem{lemma}[theorem]{Lemma}
\newtheorem{proposition}[theorem]{Proposition}
\newtheorem{corollary}[theorem]{Corollary}

\newtheorem{remark}[theorem]{Remark}

\crefname{example}{example}{examples}
\Crefname{example}{Example}{Examples}

\crefname{corollary}{corollary}{corollaries}
\Crefname{corollary}{Corollary}{Corollaries}

\crefname{definition}{definition}{definitions}
\Crefname{definition}{Definition}{Definitions}

\crefname{remark}{remark}{remarks}
\Crefname{remark}{Remark}{Remarks}

\crefname{conjecture}{conjecture}{conjectures}
\Crefname{conjecture}{Conjecture}{Conjectures}

\crefname{lemma}{lemma}{lemmas}
\Crefname{lemma}{Lemma}{Lemmas}

\crefname{proposition}{proposition}{propositions}
\Crefname{proposition}{Proposition}{Propositions}

\crefname{theorem}{theorem}{theorems}
\Crefname{theorem}{Theorem}{Theorems}

\numberwithin{equation}{section}

\usepackage{fontspec}
\begin{document}

\thispagestyle{fancy}

\vspace{0.2cm}

\begin{center}
\Large{\HeadTitleTwo}
\end{center}

\hspace{3cm}

\begin{center}
Luc Ramsès TALLA WAFFO \\
Technische Universität Darmstadt\\
Karolinenplatz 5, 64289 Darmstadt, Germany\\
ramses.talla@stud.tu-darmstadt.de\\
\vspace{0.5cm}
\today
\end{center}

\begin{abstract}
Let \(g_n(x)\) be the Mittag--Leffler polynomials defined by

$$
\sum_{n\ge0}g_n(x)t^n=\frac12\left(\frac{1+t}{1-t}\right)^x.
$$

We derive the coefficient formula

$$
g_n(x)=\frac1n\sum_{j=0}^{\lfloor (n-1)/2\rfloor}
(-1)^j\frac{2^{n-2j-1}}{(n-2j-1)!}
[u^{2j}](u\cot u)^n\,x^{n-2j},
$$

equivalently

$$
[u^{n-r}](u\cot u)^n
=(-1)^{(n-r)/2}\frac{n(r-1)!}{2^{r-1}}[x^r]g_n(x).
$$

The identity is suggested by comparing an umbral representation of the hyperbolic tangent integral

$$
K(a,b)=\int_0^\infty \frac{\tanh^a x}{x^b}\,dx
$$

with a compact cotangent-coefficient formula, but is proved independently from the generating function by formal Lagrange--Bürmann inversion. Substitution of the proved bridge back into the integral formula then yields a rigorous derivation of the umbral Mittag--Leffler representation.

We also develop a Dirichlet--beta analogue for

$$
I_{m,n}^{\beta}=\int_0^\infty\frac{\tanh^m x}{x^n\cosh x}\,dx.
$$

Defining \(q_m(x)\) by

$$
\sum_{m\ge0}q_m(x)t^m=(1-t^2)^{-1/2}\exp(2x\operatorname{artanh}t),
$$

we prove

$$
[u^{m-r}]\frac{u}{\sin u}(u\cot u)^m
=(-1)^{(m-r)/2}\frac{r!}{2^r}[x^r]q_m(x).
$$

This gives a parallel umbral representation for the beta integrals. The family \(q_m\) is a shifted \(c=-1,\beta=1\) specialization of the classical Meixner family, yielding a beta/Meixner counterpart to the zeta/Mittag--Leffler correspondence.

\end{abstract}

\vspace{0.2cm}

\paragraph{Notation.}
We write $[z^k]F(z)$ for the coefficient of $z^k$ in the formal power
series $F(z)$ at the origin.  The Riemann zeta function and Dirichlet
beta function are denoted by $\zeta(s)$ and $\beta(s)$, respectively.
Throughout, $a,b,m,n,r,j,p$ are integers in the ranges specified where
they occur.  All applications of Lagrange inversion and formal residue
substitution are understood as identities of formal power series or
Laurent series; see \cite[Chap.~III]{Comtet1974},
\cite[Sec.~I.5]{FlajoletSedgewick2009}, and also
\cite{Gessel2016,SuryaWarnke2023}.

\vspace{0.5cm}

\section*{Introduction}

The Mittag--Leffler polynomials form a classical polynomial family going
back to Mittag--Leffler and Bateman; see \cite{MittagLeffler1891,Bateman1940}.
We use the normalization
\begin{equation}
\label{eq:generating-function}
\sum_{n=0}^{\infty}g_n(x)t^n
=\frac12\left(\frac{1+t}{1-t}\right)^x
=\frac12\exp\!\bigl(2x\operatorname{artanh}t\bigr),
\end{equation}
so that
\(g_0(x)=\frac12,\qquad g_1(x)=x,\qquad g_2(x)=x^2,\qquad g_3(x)=\dfrac{x+2x^3}{3}\).
Equivalently, if $M_n(x)$ denotes Bateman's factorial normalization,
then
\(M_n(x)=2n!\,g_n(x)\).

A particularly interesting appearance of these polynomials occurs in
improper integrals involving powers of $\tanh x$.  For \( \displaystyle K(a,b):=\int_0^\infty\frac{\tanh^a x}{x^b}\,dx\), with \(a\ge b\ge2\) and \(a\equiv b\pmod2\),
an umbral expression in terms of the Mittag--Leffler polynomial $g_a$
provides a useful point of comparison.  In our earlier work
\cite[Remark~4.7]{TallaWaffo2026}, we noted the existence of such umbral
Mittag--Leffler forms and evaluated the same integral family in direct
coefficient form, while Li and Chu treat it by contour integration and
obtain finite combinations of odd zeta values \cite{LiChu2024}.

The umbral formula is used here only as a point of departure.  The logical
core of the present
paper is independent of it: the main cotangent coefficient formula for
$g_n$ is proved directly from \eqref{eq:generating-function}.  At the end
of the paper the implication is reversed, and the umbral tangent-integral
formula is deduced rigorously from our compact coefficient formula
\cite{TallaWaffo2026}.  This produces a closed logical circle rather than
relying on a comparison of special values.

A second coefficient language occurs in the Dirichlet--beta counterpart
of the same hyperbolic framework, where the elementary kernel
\(\dfrac{u}{\sin u}(u\cot u)^m\)
replaces $(u\cot u)^n$.  Even beta values and their arithmetic have been
studied from several complementary viewpoints; see, for example,
\cite{Apostol1976,WeissteinBeta,Zudilin2019,Kyrion2025}, together with
our earlier studies \cite{talla_waffo_integral_2025,
TallaWaffo2026arxiv2602.16761}.  In \cref{sec:beta-umbral}
we show that this second kernel also possesses a natural polynomial
model.  The resulting family is a shifted formal specialization of the
Meixner polynomials, whose generating functions and hypergeometric
normalizations are classical; see \cite{NIST:DLMF,KoekoekLeskySwarttouw2010}
and the general Sheffer perspective in
\cite{Sheffer1939,RotaKahanerOdlyzko1973,Roman1984}.

The main identity is
\begin{equation}
\label{eq:main-intro}
 g_n(x)
 =\frac1n
 \sum_{j=0}^{\lfloor(n-1)/2\rfloor}
 (-1)^j
 \frac{2^{n-2j-1}}{(n-2j-1)!}
 [u^{2j}](u\cot u)^n\,x^{n-2j}.
\end{equation}
It gives every coefficient of $g_n$ from a single elementary generating
kernel $(u\cot u)^n$.  In particular, parity, degree, and the leading
coefficient are immediate.

In the sources surveyed for this paper, including the classical treatment of
Bateman and recent work on the same hyperbolic-integral family, we did not
locate this exact cotangent-coefficient representation for $g_n$.  We therefore
state the formula as a derived representation rather than making an absolute
priority claim.

The paper is organized as follows.  \Cref{sec:background} fixes the
normalizations and records the two tangent-integral representations that
motivate the formula.  \Cref{sec:formal-comparison} explains the formal
coefficient comparison and isolates the expected bridge between
$g_n(x)$ and $(u\cot u)^n$.  \Cref{sec:direct-proof} proves this bridge
independently from the generating function using Lagrange--B\"urmann
inversion.  \Cref{sec:consequences} gives structural consequences and
examples.  \Cref{sec:umbral-recovery} inserts the proved bridge into the
compact tangent-integral identity and obtains the umbral formula as a
theorem.  \Cref{sec:beta-umbral} then develops the Dirichlet--beta
companion: a shifted Meixner-type polynomial family is derived directly
from its generating function, its coefficient bridge is proved by a
formal residue substitution, and the compact beta formula is converted
into a second umbral identity.

\vspace{0.3cm}

\section{Background and the umbral starting point}
\label[section]{sec:background}

\subsection{Mittag--Leffler polynomials}

We begin with two elementary consequences of
\eqref{eq:generating-function}.

\begin{lemma}[Parity]
\label[lemma]{lem:parity}
For every $n\ge0$,
\(g_n(-x)=(-1)^n g_n(x)\).
Consequently, for $n\ge1$,
\[g_n(x)= \sum_{\substack{1\le r\le n\\r\equiv n\ (2)}} c_{n,r}x^r, \qquad c_{n,r}:=[x^r]g_n(x)\].
\end{lemma}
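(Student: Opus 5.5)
The plan is to read the parity directly off the generating function \eqref{eq:generating-function}, then use it together with a vanishing constant term to obtain the stated support of $g_n$.

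\emph{Symmetry of the generating function.} Write $G(x,t)=\frac12\exp(2x\operatorname{artanh}t)$. Since $\operatorname{artanh}$ is odd, $\operatorname{artanh}(-t)=-\operatorname{artanh}t$ as formal power series, and therefore
\[
G(-x,t)=\tfrac12\exp(-2x\operatorname{artanh}t)=\tfrac12\exp\bigl(2x\operatorname{artanh}(-t)\bigr)=G(x,-t).
\]
Comparing coefficients of $t^n$ on the two sides gives $g_n(-x)=(-1)^n g_n(x)$. All of this takes place in $\mathbb{Q}[x][[t]]$, because $\operatorname{artanh}t$ has zero constant term, so the exponential is a well-defined formal series whose $t^n$-coefficient is a polynomial in $x$.

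\emph{Degree and constant term.} Expand
\[
G(x,t)=\tfrac12\sum_{k\ge0}\frac{(2x)^k(\operatorname{artanh}t)^k}{k!}.
\]
Since $(\operatorname{artanh}t)^k=t^k+O(t^{k+1})$, only the terms with $k\le n$ contribute to $[t^n]$. Hence $g_n$ is a polynomial of degree at most $n$. The $k=0$ term is the constant $\frac12$ and contributes only to $t^0$, so for $n\ge1$ every term contributing to $[t^n]$ has $k\ge1$. Consequently $g_n(0)=0$.

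\emph{Conclusion.} By the parity identity, $c_{n,r}(-1)^r=(-1)^n c_{n,r}$, so $c_{n,r}=0$ whenever $r\not\equiv n\pmod 2$. Combined with $1\le r\le n$ from the previous step, this gives the displayed expansion.

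None of these steps is a real obstacle. The only point needing care is to state explicitly that the case $r=0$ is excluded because the constant term vanishes for $n\ge1$, and not merely because of parity, since for even $n$ parity alone would allow $r=0$.
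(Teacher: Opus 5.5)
Your proof is correct and follows the paper's argument: the paper uses the invariance of the generating function under $(x,t)\mapsto(-x,-t)$ (your identity $G(-x,t)=G(x,-t)$ says the same thing) to get the parity, and then uses $g_n(0)=0$ for $n\ge1$ to exclude $r=0$. The one addition is that you justify $\deg g_n\le n$ and $g_n(0)=0$ by expanding the exponential, whereas the paper takes both for granted; the vanishing of $g_n(0)$ also follows at once from $G(0,t)=\tfrac12$.
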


\begin{proof}
Replacing $(x,t)$ by $(-x,-t)$ in
\eqref{eq:generating-function} leaves the right-hand side unchanged:
\(\frac12\left(\frac{1-t}{1+t}\right)^{-x} =\frac12\left(\frac{1+t}{1-t}\right)^x\).
Comparison of coefficients of $t^n$ yields the parity identity.  Since
$g_n(0)=0$ for $n\ge1$, the asserted expansion follows.
\end{proof}

The classical relation with the factorial normalization is
\begin{equation}
\label{eq:M-g-relation}
M_n(x)=2n!\,g_n(x),
\end{equation}
consistent with the generating function
\(\sum_{n\ge0}M_n(x)\frac{t^n}{n!} =\left(\frac{1+t}{1-t}\right)^x\).
For historical background and integral representations of the
Mittag--Leffler polynomials, see \cite{Bateman1940}.

\subsection{The umbral hyperbolic tangent formula}

For odd positive integers $k$, define
\begin{equation}
\label{eq:hk-definition}
 h_k
 :=(-1)^{(k-1)/2}
 \frac{(k-1)!(2^k-1)}{2^{k-1}\pi^{k-1}}\zeta(k).
\end{equation}
It is convenient to make the umbral convention completely formal, in the
spirit of the linear-functional viewpoint of umbral calculus
\cite{Roman1984}.  Let $\mathcal U_h$ be the linear evaluation rule on
the monomials that occur below, defined by
\(\mathcal U_h(h^k)=h_k \qquad(k\ \text{odd})\).
Because $a\equiv b\pmod2$ and every nonzero exponent $r$ in $g_a$ has
$r\equiv a\pmod2$, the exponent $r+b-1$ is always odd.  Hence no even
umbra are needed.

The umbral representation that motivates our comparison is
\begin{equation}
\label{eq:umbral-start}
K(a,b)
=\frac{a2^{b-1}}{(b-1)!}
\mathcal U_h\!\left((-h)^{b-1}g_a(h)\right),
\qquad a\ge b\ge2,
\quad a\equiv b\pmod2.
\end{equation}
In our earlier work \cite[Remark~4.7]{TallaWaffo2026}, we noted the
existence of umbral Mittag--Leffler forms for this integral family.  Here
\eqref{eq:umbral-start} is used only to motivate the comparison; it is not
used as an input to the proof of our main polynomial identity.

Writing
\(g_a(x)= \sum_{\substack{1\le r\le a\\r\equiv a\ (2)}}c_{a,r}x^r\),
formula \eqref{eq:umbral-start} means explicitly
\begin{equation}
\label{eq:umbral-expanded}
K(a,b)
=\frac{a2^{b-1}}{(b-1)!}(-1)^{b-1}
\sum_{\substack{1\le r\le a\\r\equiv a\ (2)}}
 c_{a,r}\,h_{r+b-1}.
\end{equation}

\subsection{The compact cotangent formula}

In our earlier work \cite{TallaWaffo2026}, we proved that, for integers
$m\ge n\ge1$ with $m+n$ even,
\begin{equation}
\label{eq:talla-original}
\int_0^\infty\frac{\tanh^{m+1}x}{x^{n+1}}\,dx
=
(-1)^{(m-n)/2}
\sum_{p=\lceil n/2\rceil}^{(m+n)/2}
\binom{2p}{n}(2^{2p+1}-1)
\frac{\zeta(2p+1)}{\pi^{2p}}
[u^{m+n-2p}](u\cot u)^{m+1}.
\end{equation}
With
\(a=m+1,\qquad b=n+1\),
this becomes
\begin{equation}
\label{eq:talla-ab}
\boxed{
K(a,b)
=
(-1)^{(a-b)/2}
\sum_{p=\lceil(b-1)/2\rceil}^{(a+b-2)/2}
\binom{2p}{b-1}(2^{2p+1}-1)
\frac{\zeta(2p+1)}{\pi^{2p}}
[u^{a+b-2-2p}](u\cot u)^a.
}
\end{equation}
Unlike \eqref{eq:umbral-start}, this is a direct coefficient extraction
from an elementary power series.

\section{Formal comparison and the coefficient bridge}
\label[section]{sec:formal-comparison}

This section explains how the main identity is discovered.  The argument
is intentionally separated from the proof in
\cref{sec:direct-proof}.

Fix $a\ge b\ge2$ with $a\equiv b\pmod2$, and write
\(g_a(x)=\sum_r c_{a,r}x^r\).
For a summation index $p$ in \eqref{eq:talla-ab}, set
\begin{equation}
\label{eq:r-p-change}
r:=2p-b+2.
\end{equation}
Then
\(r+b-1=2p+1, \qquad a+b-2-2p=a-r\).
Thus the zeta value $\zeta(2p+1)$ in the compact formula corresponds to
the umbral value $h_{r+b-1}$ in \eqref{eq:umbral-expanded}.

Using \eqref{eq:hk-definition},
\(\displaystyle h_{2p+1} =(-1)^p \frac{(2p)!(2^{2p+1}-1)}{2^{2p}\pi^{2p}} \zeta(2p+1)\).
If one formally matches the multiplier of the same displayed zeta value
in \eqref{eq:umbral-expanded} and \eqref{eq:talla-ab}, the elementary
factors cancel and suggest
\begin{equation}
\label{eq:bridge-suggested}
[u^{a-r}](u\cot u)^a
=
(-1)^{(a-r)/2}
\frac{a(r-1)!}{2^{r-1}}c_{a,r}.
\end{equation}
Equivalently,
\begin{equation}
\label{eq:bridge-suggested-reverse}
c_{a,r}
=
(-1)^{(a-r)/2}
\frac{2^{r-1}}{a(r-1)!}
[u^{a-r}](u\cot u)^a.
\end{equation}

\begin{remark}[Why this is not yet a proof]
The equality of two numerical finite sums involving
$\zeta(3),\zeta(5),\ldots$ does not justify termwise coefficient
comparison: sufficiently strong linear independence statements for odd
zeta values are not known.  Therefore
\eqref{eq:bridge-suggested} is used only as a discovery mechanism.  The
next section proves it independently as an identity of formal power
series, with no zeta values and no integral evaluations involved.
\end{remark}

Substituting $r=a-2j$ in \eqref{eq:bridge-suggested-reverse} suggests the
polynomial identity
\begin{equation}
\label{eq:main-suggested}
 g_a(x)
 =\frac1a
 \sum_{j=0}^{\lfloor(a-1)/2\rfloor}
 (-1)^j
 \frac{2^{a-2j-1}}{(a-2j-1)!}
 [u^{2j}](u\cot u)^a\,x^{a-2j}.
\end{equation}
We now prove this directly.

\section{Independent derivation from the generating function}
\label[section]{sec:direct-proof}

The proof is a short application of Lagrange--B\"urmann inversion to the
inverse pair
\(w=\operatorname{artanh}t, \qquad t=\tanh w\).

\begin{lemma}[Coefficient of a fixed power of $x$]
\label[lemma]{lem:x-coefficient-artanh}
Let $n\ge1$ and $1\le r\le n$.  Then
\begin{equation}
\label{eq:x-coefficient-artanh}
[x^r]g_n(x)
=\frac{2^{r-1}}{r!}
[t^n](\operatorname{artanh}t)^r.
\end{equation}
\end{lemma}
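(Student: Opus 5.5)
Expand the generating function \eqref{eq:generating-function} as a power series in $x$ and read off coefficients directly; no inversion is needed for this lemma. Write
\[
\frac12\exp\bigl(2x\operatorname{artanh}t\bigr)=\frac12\sum_{r\ge0}\frac{(2x)^r}{r!}(\operatorname{artanh}t)^r
=\frac12+\sum_{r\ge1}\frac{2^{r-1}}{r!}x^r(\operatorname{artanh}t)^r .
\]
This is an identity in the ring of formal power series in $t$ with coefficients in $\mathbb{Q}[x]$. Since $\operatorname{artanh}t=t+t^3/3+\cdots$ has zero constant term, $(\operatorname{artanh}t)^r=O(t^r)$. Hence for each fixed $n$ only the finitely many terms with $r\le n$ contribute to $[t^n]$, and rearranging the double sum is legitimate.

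Next extract $[t^n]$ on both sides. The left side gives $g_n(x)$. The right side gives
\[
\delta_{n,0}\tfrac12+\sum_{r=1}^{n}\frac{2^{r-1}}{r!}\,[t^n](\operatorname{artanh}t)^r\,x^r .
\]
This shows directly that $g_n$ is a polynomial of degree at most $n$. For $n\ge1$ the constant term vanishes, consistent with \cref{lem:parity}.

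Finally compare coefficients of $x^r$ for $1\le r\le n$, which gives \eqref{eq:x-coefficient-artanh}. As a sanity check, $r=n$ yields $2^{n-1}/n!$, and the case $n=3$, $r=1$ gives $[t^3]\operatorname{artanh}t=1/3$, matching $g_3$.

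There is no real obstacle here. The only point needing care is the justification of interchanging the $r$-sum with coefficient extraction, which the valuation bound $(\operatorname{artanh}t)^r=O(t^r)$ settles. The substantive Lagrange--B\"urmann step, converting $[t^n](\operatorname{artanh}t)^r$ into a coefficient of $(u\cot u)^n$ via $t=\tanh w$, belongs to the subsequent result rather than this lemma.
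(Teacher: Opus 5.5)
Your proposal is correct and matches the paper's own proof: expand $\frac12\exp(2x\operatorname{artanh}t)$ in powers of $x$ and extract $[x^r t^n]$. Your remark that $(\operatorname{artanh}t)^r=O(t^r)$ makes each coefficient extraction a finite sum supplies the justification that the paper leaves implicit.
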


\begin{proof}
Expanding the exponential in \eqref{eq:generating-function} gives
\[
\frac12\exp\!\bigl(2x\operatorname{artanh}t\bigr)
=
\frac12\sum_{r\ge0}
\frac{2^r x^r}{r!}(\operatorname{artanh}t)^r.
\]
Extracting $[x^rt^n]$ gives \eqref{eq:x-coefficient-artanh}.
\end{proof}

\begin{lemma}[Lagrange--B\"urmann conversion]
\label[lemma]{lem:lagrange-conversion}
For $n\ge r\ge1$,
\begin{equation}
\label{eq:lagrange-conversion}
[t^n](\operatorname{artanh}t)^r
=\frac{r}{n}
[w^{n-r}](w\coth w)^n.
\end{equation}
\end{lemma}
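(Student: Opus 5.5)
We apply the Lagrange--B\"urmann formula to the compositional inverse pair \(t=\tanh w\), \(w=\operatorname{artanh}t\). We write \(t=w/\phi(w)\) with
\[
\phi(w):=\frac{w}{\tanh w}=w\coth w ,
\]
which is a formal power series in \(w\) with \(\phi(0)=1\). In particular it is invertible, so the Lagrange framework applies. The compositional inverse \(w(t)=\operatorname{artanh}t\) is then the unique formal solution of \(w=t\,\phi(w)\).

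The Lagrange--B\"urmann theorem states that for any formal power series \(H\) and \(n\ge1\),
\[
[t^n]H(w(t))=\frac1n[w^{n-1}]H'(w)\phi(w)^n .
\]
We take \(H(w)=w^r\) with \(1\le r\le n\), so that \(H'(w)=rw^{r-1}\). This gives
\[
[t^n](\operatorname{artanh}t)^r=\frac rn[w^{n-1}]w^{r-1}(w\coth w)^n=\frac rn[w^{n-r}](w\coth w)^n .
\]
The last step only shifts the coefficient index by \(r-1\), which is legitimate because \(n-r\ge0\). This is exactly \eqref{eq:lagrange-conversion}.

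The one point to check carefully is the hypothesis of the theorem. We need \(\phi(0)\ne0\), and we need the inverse of \(w\mapsto w/\phi(w)=\tanh w\) to be \(\operatorname{artanh}t\) as formal series. Both hold: \(w\coth w=1+w^2/3-\cdots\) is even with constant term \(1\), and \(\tanh\) and \(\operatorname{artanh}\) are mutually inverse formal series. The identity is purely formal, so no convergence issues arise. This verification is the only real obstacle, and it is mild; everything else is direct substitution.

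Looking ahead, the substitution \(w=iu\) turns \(w\coth w\) into \(u\cot u\), and it contributes the sign factor \((-1)^{(n-r)/2}\) appearing in the bridge \eqref{eq:bridge-suggested}. Combined with \cref{lem:x-coefficient-artanh}, this yields the main formula.
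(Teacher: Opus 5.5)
Your proof is correct and is essentially the paper's argument: both apply the Lagrange--B\"urmann formula to the inverse pair $t=\tanh w$, $w=\operatorname{artanh}t$, with $\phi(w)=w\coth w$ and $H(w)=w^r$. You simply write out the derivative form $H'(w)=rw^{r-1}$ and the index shift $[w^{n-1}]w^{r-1}F=[w^{n-r}]F$, which the paper compresses into ``the standard power form.''
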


\begin{proof}
The series $t=\tanh w$ has linear term $w$, hence possesses the formal
inverse $w=\operatorname{artanh}t$.  The Lagrange--B\"urmann formula in
the standard power form gives
\[
[t^n]w(t)^r
=\frac{r}{n}[w^{n-r}]
\left(\frac{w}{\tanh w}\right)^n.
\]
Since $w/\tanh w=w\coth w$, this is
\eqref{eq:lagrange-conversion}.
\end{proof}

Combining the two lemmas yields an intermediate hyperbolic coefficient
formula.

\begin{proposition}[Hyperbolic coefficient bridge]
\label[proposition]{prop:coth-bridge}
For $n\ge1$ and $1\le r\le n$,
\begin{equation}
\label{eq:coth-bridge}
[x^r]g_n(x)
=
\frac{2^{r-1}}{n(r-1)!}
[w^{n-r}](w\coth w)^n.
\end{equation}
In particular the coefficient vanishes whenever $n-r$ is odd.
\end{proposition}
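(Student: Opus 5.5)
The plan is to chain \cref{lem:x-coefficient-artanh} and \cref{lem:lagrange-conversion} and then read off the vanishing claim from parity.

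First, fix $n\ge1$ and $1\le r\le n$. By \cref{lem:x-coefficient-artanh},
\[
[x^r]g_n(x)=\frac{2^{r-1}}{r!}[t^n](\operatorname{artanh}t)^r .
\]
Next, substitute the Lagrange--B\"urmann conversion \eqref{eq:lagrange-conversion}, which applies in exactly this range $n\ge r\ge1$. This gives
\[
[x^r]g_n(x)=\frac{2^{r-1}}{r!}\cdot\frac{r}{n}[w^{n-r}](w\coth w)^n .
\]
The simplification $r/r!=1/(r-1)!$ then yields \eqref{eq:coth-bridge}.

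For the vanishing statement, I would note that $w\coth w$ is an even formal power series in $w$. Indeed, $\tanh$ is odd, so $w/\tanh w$ is even, and its constant term is $1$. Hence $(w\coth w)^n$ is also even. Its coefficient of $w^{n-r}$ is therefore zero whenever $n-r$ is odd, and by \eqref{eq:coth-bridge} so is $[x^r]g_n(x)$. This is consistent with \cref{lem:parity}, which serves as an independent check.

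There is no real obstacle here. The only point needing care is the range of indices. We need $r\ge1$ so that both the factor $r/n$ in Lagrange inversion and the factorial $(r-1)!$ make sense. We need $r\le n$ so that the exponent $n-r$ is nonnegative. Both conditions are supplied by the hypotheses.
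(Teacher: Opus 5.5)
Your proposal is correct and follows the paper's proof: you substitute the Lagrange--B\"urmann conversion \eqref{eq:lagrange-conversion} into \eqref{eq:x-coefficient-artanh}, simplify $r/r!=1/(r-1)!$, and obtain the vanishing claim from the evenness of $w\coth w$. Your added justification of that evenness (oddness of $\tanh$ together with the constant term $1$) and your check of the index range are sound.
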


\begin{proof}
Insert \eqref{eq:lagrange-conversion} into
\eqref{eq:x-coefficient-artanh}.  Since $w\coth w$ is an even formal
power series, $[w^{n-r}](w\coth w)^n=0$ if $n-r$ is odd.
\end{proof}

To pass from the hyperbolic to the trigonometric kernel, observe that
\begin{equation}
\label{eq:coth-cot-analytic-continuation}
(iu)\coth(iu)=u\cot u.
\end{equation}
Thus, if $n-r=2j$,
\begin{equation}
\label{eq:coth-cot-coefficients}
[w^{2j}](w\coth w)^n
=(-1)^j[u^{2j}](u\cot u)^n.
\end{equation}

\begin{theorem}[Cotangent coefficient formula for $g_n$]
\label[theorem]{thm:main-mittag-leffler}
For every integer $n\ge1$,
\begin{equation}
\label{eq:main-theorem}
\boxed{
 g_n(x)
 =\frac1n
 \sum_{j=0}^{\lfloor(n-1)/2\rfloor}
 (-1)^j
 \frac{2^{\,n-2j-1}}{(n-2j-1)!}
 [u^{2j}](u\cot u)^n\,x^{n-2j}.
}
\end{equation}
Equivalently, for $1\le r\le n$ with $r\equiv n\pmod2$,
\begin{equation}
\label{eq:main-bridge}
\boxed{
 [u^{n-r}](u\cot u)^n
 =(-1)^{(n-r)/2}
 \frac{n(r-1)!}{2^{r-1}}
 [x^r]g_n(x).
}
\end{equation}
\end{theorem}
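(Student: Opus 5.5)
The theorem follows by combining \cref{prop:coth-bridge} with the coefficient relation \eqref{eq:coth-cot-coefficients}; no new analytic input is needed.

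First I will justify \eqref{eq:coth-cot-coefficients} purely formally rather than through analytic continuation. The series $w\coth w=\sum_{k\ge0}\beta_k w^{2k}$ is even with rational coefficients. Substituting $w=iu$ in the formal identity $w\coth w=w\cosh w/\sinh w$ gives
\[
(iu)\coth(iu)=(iu)\,\frac{\cos u}{i\sin u}=u\cot u .
\]
This uses $\cosh(iu)=\cos u$ and $\sinh(iu)=i\sin u$, which hold coefficientwise in $\mathbb{C}[[u]]$. Substitution $w\mapsto iu$ is a ring homomorphism of formal power series, so raising to the $n$th power gives $(u\cot u)^n=\bigl((w\coth w)^n\bigr)\big|_{w=iu}$. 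Since $(iu)^{2j}=(-1)^ju^{2j}$, comparing coefficients of $u^{2j}$ yields $[u^{2j}](u\cot u)^n=(-1)^j[w^{2j}](w\coth w)^n$. This is equivalent to \eqref{eq:coth-cot-coefficients} because $(-1)^j=(-1)^{-j}$.

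Next, fix $1\le r\le n$ with $r\equiv n\pmod 2$ and set $n-r=2j$. Then \eqref{eq:coth-bridge} gives
\[
[x^r]g_n(x)=\frac{2^{r-1}}{n(r-1)!}(-1)^j[u^{2j}](u\cot u)^n .
\]
Solving for the cotangent coefficient gives \eqref{eq:main-bridge} directly. For \eqref{eq:main-theorem} I write $g_n(x)=\sum_r c_{n,r}x^r$ and use \cref{lem:parity}, so only $r=n-2j$ with $1\le r\le n$ occur, that is, $0\le j\le\lfloor (n-1)/2\rfloor$. Substituting $r=n-2j$ turns $2^{r-1}/(r-1)!$ into $2^{n-2j-1}/(n-2j-1)!$, which is the stated summand. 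Note that $g_n(0)=0$ for $n\ge1$, so the term $r=0$ is correctly absent. The coefficients with $n-r$ odd vanish by \cref{prop:coth-bridge}, so they do not appear.

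No step is a real obstacle. The only point needing care is stating the $w\mapsto iu$ substitution as an identity of formal power series, so that the argument stays independent of convergence. I will also remark that the two boxed forms are equivalent by the reindexing $r=n-2j$, which is a bijection between the stated ranges.
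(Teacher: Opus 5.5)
Your proposal is correct and is essentially the paper's own proof. You set $r=n-2j$ in \eqref{eq:coth-bridge}, convert the $w\coth w$ coefficients into $u\cot u$ coefficients via $w=iu$, and then sum over the admissible $j$ or rearrange. The one addition is that you justify \eqref{eq:coth-cot-coefficients} explicitly: you treat the substitution $w\mapsto iu$ as a ring homomorphism of formal power series instead of appealing to analytic continuation, which is a minor tightening of the same argument.
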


\begin{proof}
Set $r=n-2j$ in \eqref{eq:coth-bridge} and use
\eqref{eq:coth-cot-coefficients}.  This gives
\[
[x^{n-2j}]g_n(x)
=
\frac{(-1)^j}{n}
\frac{2^{n-2j-1}}{(n-2j-1)!}
[u^{2j}](u\cot u)^n.
\]
Summing over all admissible $j$ proves \eqref{eq:main-theorem}, and
rearranging the same identity gives \eqref{eq:main-bridge}.
\end{proof}

\begin{corollary}[Bateman normalization]
\label[corollary]{cor:bateman-normalization}
For $n\ge1$, the factorially normalized Mittag--Leffler polynomial
$M_n(x)=2n!g_n(x)$ satisfies
\begin{equation}
\label{eq:bateman-cotangent}
\boxed{
M_n(x)
=(n-1)!
\sum_{j=0}^{\lfloor(n-1)/2\rfloor}
(-1)^j
\frac{2^{n-2j}}{(n-2j-1)!}
[u^{2j}](u\cot u)^n\,x^{n-2j}.
}
\end{equation}
\end{corollary}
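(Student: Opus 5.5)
The corollary is a direct rescaling of \cref{thm:main-mittag-leffler}, so the plan is to multiply \eqref{eq:main-theorem} by $2n!$ and simplify the constants term by term.

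First, by the Bateman relation \eqref{eq:M-g-relation}, $M_n(x)=2n!\,g_n(x)$. Substituting the boxed expansion \eqref{eq:main-theorem} for $g_n(x)$ gives
\[
M_n(x)=\frac{2n!}{n}\sum_{j=0}^{\lfloor(n-1)/2\rfloor}(-1)^j\frac{2^{n-2j-1}}{(n-2j-1)!}[u^{2j}](u\cot u)^n\,x^{n-2j}.
\]

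Second, I simplify the prefactor. Since $n\ge1$, we have $2n!/n=2(n-1)!$. Absorbing the factor $2$ into the power of two, $2\cdot2^{n-2j-1}=2^{n-2j}$, while $(n-1)!$ stays outside the sum. The result is exactly \eqref{eq:bateman-cotangent}.

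There is no real obstacle. The only points to check are that the summation range and sign pattern carry over unchanged from \cref{thm:main-mittag-leffler}, and that $n\ge1$ so that dividing by $n$ is legitimate. As an optional sanity check, take $n=2$: then $(u\cot u)^2=1+O(u^2)$, so the formula gives $M_2=1!\cdot\frac{2^2}{1!}x^2=4x^2$, which agrees with $2\cdot2!\,g_2(x)=4x^2$.
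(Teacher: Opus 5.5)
Your proposal is correct and follows the paper's proof exactly: multiply \eqref{eq:main-theorem} by $2n!$, use $n!/n=(n-1)!$, and absorb the factor $2$ into the power of two. Your $n=2$ check is also correct.
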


\begin{proof}
Multiply \eqref{eq:main-theorem} by $2n!$ and use
$n!/n=(n-1)!$.
\end{proof}

\section{Structural consequences and examples}
\label[section]{sec:consequences}

\subsection{Degree, parity, and endpoint coefficients}

The main formula makes several familiar properties of the
Mittag--Leffler polynomials immediate.

\begin{corollary}[Leading coefficient]
\label[corollary]{cor:leading-coefficient}
For every $n\ge1$,
\([x^n]g_n(x)=\frac{2^{n-1}}{n!}\).
\end{corollary}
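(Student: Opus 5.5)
The leading coefficient corresponds to the term $j=0$ in \eqref{eq:main-theorem}, equivalently to the case $r=n$ of \eqref{eq:main-bridge}. So the plan is simply to read it off from \cref{thm:main-mittag-leffler}.

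Setting $j=0$ in \eqref{eq:main-theorem}, the coefficient of $x^n$ is
\[
[x^n]g_n(x)=\frac1n\cdot\frac{2^{n-1}}{(n-1)!}\,[u^0](u\cot u)^n .
\]
Only one input is needed, the constant term of $(u\cot u)^n$. As a formal power series, $u\cot u=\dfrac{u\cos u}{\sin u}=\dfrac{\cos u}{1-u^2/6+\cdots}=1-\dfrac{u^2}{3}+\cdots$, so its constant term is $1$. Hence $[u^0](u\cot u)^n=1^n=1$.

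Substituting this gives $\dfrac{2^{n-1}}{n\,(n-1)!}=\dfrac{2^{n-1}}{n!}$, which is the claim.

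As a cross-check that avoids the cotangent, \cref{lem:x-coefficient-artanh} with $r=n$ gives $\frac{2^{n-1}}{n!}[t^n](\operatorname{artanh}t)^n$. Since $\operatorname{artanh}t=t+O(t^3)$, we have $[t^n](\operatorname{artanh}t)^n=1$, and the same value follows.

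There is no real obstacle here. The only point to check is that the $j=0$ term actually lies in the summation range, i.e.\ $0\le\lfloor(n-1)/2\rfloor$, which holds for every $n\ge1$.
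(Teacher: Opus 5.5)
Your proposal is correct and is the same argument as the paper's: both read off the $j=0$ term of \eqref{eq:main-theorem} and use $[u^0](u\cot u)^n=1$ to get $\frac{2^{n-1}}{n\,(n-1)!}=\frac{2^{n-1}}{n!}$. Your cross-check via \cref{lem:x-coefficient-artanh} with $r=n$ is also valid, and it is slightly more direct because it does not use the cotangent kernel at all.
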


\begin{proof}
In \eqref{eq:main-theorem}, the coefficient of $x^n$ corresponds to
$j=0$.  Since $[u^0](u\cot u)^n=1$,
\([x^n]g_n(x)=\frac1n\frac{2^{n-1}}{(n-1)!} =\frac{2^{n-1}}{n!}\).
\end{proof}

\begin{corollary}[The linear coefficient in odd degree]
\label[corollary]{cor:linear-coefficient}
If $n$ is odd, then
\([x]g_n(x)=\frac1n\),
and consequently
\begin{equation}
\label{eq:terminal-cotangent-coefficient}
[u^{n-1}](u\cot u)^n=(-1)^{(n-1)/2}.
\end{equation}
\end{corollary}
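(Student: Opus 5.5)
The plan is to compute $[x]g_n(x)$ directly from the generating function and then transfer the value to the cotangent side through the bridge \eqref{eq:main-bridge} of \cref{thm:main-mittag-leffler}.

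For the first step, apply \cref{lem:x-coefficient-artanh} with $r=1$. This gives
\[
[x]g_n(x)=\frac{2^{0}}{1!}[t^n]\operatorname{artanh}t=[t^n]\sum_{k\ge0}\frac{t^{2k+1}}{2k+1}.
\]
For odd $n$ the right-hand side equals $1/n$. This settles the first claim.

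For the second claim, take $r=1$ in \eqref{eq:main-bridge$}$. This is admissible because $n$ is odd, so $1\equiv n\pmod 2$ and $1\le 1\le n$. We obtain
\[
[u^{n-1}](u\cot u)^n=(-1)^{(n-1)/2}\,\frac{n\cdot 0!}{2^{0}}\,[x]g_n(x)=(-1)^{(n-1)/2}\,n\cdot\frac1n=(-1)^{(n-1)/2},
\]
which is \eqref{eq:terminal-cotangent-coefficient}.

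As an alternative route and a consistency check, one can bypass the lemma and read off $[x]g_n$ by differentiating the generating function once in $x$ at $x=0$, which yields $\operatorname{artanh}t$ again.

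There is essentially no obstacle. The only care needed is to confirm that the bridge \eqref{eq:main-bridge} really covers the endpoint case $r=1$, and that the factor $(r-1)!=0!=1$ together with the power $2^{r-1}=1$ is handled correctly. Both points hold because \cref{lem:lagrange-conversion} and \cref{prop:coth-bridge} are stated for all $1\le r\le n$.
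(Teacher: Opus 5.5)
Your proposal is correct and matches the paper's proof: you specialize \cref{lem:x-coefficient-artanh} to $r=1$ to get $[x]g_n(x)=[t^n]\operatorname{artanh}t=1/n$ for odd $n$. You then read off $[u^{n-1}](u\cot u)^n=(-1)^{(n-1)/2}$ from \eqref{eq:main-bridge} at $r=1$.
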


\begin{proof}
From \eqref{eq:x-coefficient-artanh} with $r=1$,
\([x]g_n(x)=[t^n]\operatorname{artanh}t\).
For odd $n$, the right-hand side is $1/n$.  Formula
\eqref{eq:terminal-cotangent-coefficient} then follows from
\eqref{eq:main-bridge} with $r=1$.
\end{proof}

The parity statement in \cref{lem:parity} is also transparent from
\eqref{eq:main-theorem}: only the powers
\(x^n,x^{n-2},x^{n-4},\ldots\)
appear.

\subsection{Example: $g_5$}

The needed expansion is
\((u\cot u)^5 =1-\frac53u^2+u^4+O(u^6)\).
Formula \eqref{eq:main-theorem} gives
\begin{align*}
g_5(x)
&=\frac15\left(
\frac{2^4}{4!}x^5
-\frac{2^2}{2!}\left(-\frac53\right)x^3
+[u^4](u\cot u)^5x
\right)\\
&=\frac15\left(\frac23x^5+\frac{10}{3}x^3+x\right)\\
&=\boxed{\frac{3x+10x^3+2x^5}{15}}.
\end{align*}

\subsection{Example: $g_6$}

Here
\((u\cot u)^6 =1-2u^2+\frac{23}{15}u^4+O(u^6)\).
Hence
\begin{align*}
g_6(x)
&=\frac16\left(
\frac{2^5}{5!}x^6
-\frac{2^3}{3!}(-2)x^4
+\frac{2}{1!}\frac{23}{15}x^2
\right)\\
&=\boxed{\frac{23x^2+20x^4+2x^6}{45}}.
\end{align*}
These agree with the classical low-degree polynomials recorded, for
example, in \cite{Bateman1940}.

\subsection{A normalization identity for powers of $u\cot u$}

For $n\ge1$, one has $g_n(1)=1$.  Setting $x=1$ in
\eqref{eq:main-theorem} therefore gives the finite identity
\begin{equation}
\label{eq:normalization-cotangent-sum}
\boxed{
\sum_{j=0}^{\lfloor(n-1)/2\rfloor}
(-1)^j
\frac{2^{n-2j-1}}{(n-2j-1)!}
[u^{2j}](u\cot u)^n
=n.
}
\end{equation}
Thus the usual normalization of the Mittag--Leffler polynomials becomes
a nontrivial coefficient sum for the cotangent kernel.

\section{Recovery of the umbral tangent-integral formula}
\label[section]{sec:umbral-recovery}

We now close the circle.  The cotangent coefficient bridge was proved in
\cref{sec:direct-proof} without using the umbral tangent-integral
formula.  We may therefore insert it into our compact coefficient formula
\eqref{eq:talla-ab} and deduce the umbral representation rigorously.

\begin{theorem}[Umbral Mittag--Leffler form of $K(a,b)$]
\label[theorem]{thm:umbral-recovered}
Let $a\ge b\ge2$ be integers with $a\equiv b\pmod2$, and define $h_k$
for odd $k$ by \eqref{eq:hk-definition}.  Then
\begin{equation}
\label{eq:umbral-recovered}
\boxed{
K(a,b)
=\int_0^\infty\frac{\tanh^a x}{x^b}\,dx
=\frac{a2^{b-1}}{(b-1)!}
\mathcal U_h\!\left((-h)^{b-1}g_a(h)\right).
}
\end{equation}
\end{theorem}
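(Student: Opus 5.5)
We start from the compact formula \eqref{eq:talla-ab}, which holds unconditionally, and rewrite each coefficient $[u^{a+b-2-2p}](u\cot u)^a$ using the proved bridge \eqref{eq:main-bridge} of \cref{thm:main-mittag-leffler}. Concretely, we reindex the sum by $r:=2p-b+2$ as in \eqref{eq:r-p-change}, so that $2p+1=r+b-1$ and $a+b-2-2p=a-r$. Since $a\equiv b\pmod 2$, $r\equiv a\pmod 2$. The range $\lceil (b-1)/2\rceil\le p\le (a+b-2)/2$ becomes $1\le r\le a$ with $r\equiv a\pmod2$, once we check the endpoints: the lower endpoint gives $r=1$ or $r=2$ according to the parity of $b$, and the upper gives $r=a$. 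This is exactly the index set of the expansion of $g_a$ in \cref{lem:parity}.

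Next we substitute $[u^{a-r}](u\cot u)^a=(-1)^{(a-r)/2}\frac{a(r-1)!}{2^{r-1}}c_{a,r}$ into \eqref{eq:talla-ab}. Each term becomes
\[
(-1)^{(a-b)/2+(a-r)/2}\binom{2p}{b-1}(2^{2p+1}-1)\frac{\zeta(2p+1)}{\pi^{2p}}\frac{a(r-1)!}{2^{r-1}}c_{a,r}.
\]
We then compare this with the corresponding term of \eqref{eq:umbral-expanded}, namely
\[
\frac{a2^{b-1}}{(b-1)!}(-1)^{b-1}c_{a,r}h_{2p+1},
\qquad h_{2p+1}=(-1)^p\frac{(2p)!(2^{2p+1}-1)}{2^{2p}\pi^{2p}}\zeta(2p+1).
\]
Termwise equality is now a legitimate identity between the two finite sums. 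We are not extracting coefficients of zeta values; we are showing that the two expressions agree term by term, so no linear independence of odd zeta values is needed.

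The verification is elementary bookkeeping. For the factorials, $\binom{2p}{b-1}(r-1)!=\frac{(2p)!}{(b-1)!}$, because $2p-b+1=r-1$. For the powers of two, $2^{-(r-1)}=2^{b-1}\cdot 2^{-2p}$, again because $r-1=2p-b+1$. These two facts match all magnitudes.

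The step I expect to need the most care is the sign. The paper's side carries $(-1)^{(a-b)/2+(a-r)/2}$, and the umbral side carries $(-1)^{b-1+p}$. Writing $2p=r+b-2$, we get
\[
(a-b)/2+(a-r)/2=a-p-1 \quad\text{and}\quad b-1+p\equiv a-p-1 \pmod 2,
\]
the second congruence using $b\equiv a$. So the two signs agree. Summing over $r$ and interpreting the result via $\mathcal U_h$ then gives \eqref{eq:umbral-recovered}.
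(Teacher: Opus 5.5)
Your proposal is correct and follows the paper's proof essentially step for step: you reindex by $r=2p-b+2$, insert the bridge \eqref{eq:main-bridge}, and collapse the factorials, powers of two and signs exactly as the paper does. Your explicit check that the $p$-range maps onto $\{1\le r\le a,\ r\equiv a \pmod 2\}$ is a small addition that the paper only asserts in passing ("equivalently over all nonzero powers $r$ of $g_a$").
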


\begin{proof}
Start from \eqref{eq:talla-ab}.  For each summation index $p$, put
\(r=2p-b+2\).
Then
\(2p+1=r+b-1, \qquad a+b-2-2p=a-r, \qquad r-1=2p-b+1\).
By \eqref{eq:main-bridge},
\begin{equation}
\label{eq:insert-bridge}
[u^{a-r}](u\cot u)^a
=(-1)^{(a-r)/2}
\frac{a(r-1)!}{2^{r-1}}c_{a,r},
\qquad
c_{a,r}=[x^r]g_a(x).
\end{equation}
Also,
\begin{equation}
\label{eq:binomial-collapse}
\binom{2p}{b-1}(r-1)!
=\frac{(2p)!}{(b-1)!},
\end{equation}
and, because $r-1=2p-b+1$,
\begin{equation}
\label{eq:power-collapse}
\frac1{2^{r-1}}=\frac{2^{b-1}}{2^{2p}}.
\end{equation}
Finally, the signs satisfy
\begin{equation}
\label{eq:sign-collapse}
(-1)^{(a-b)/2+(a-r)/2}
=(-1)^{b-1+p},
\end{equation}
because the difference of the two exponents is
$a-r-2b+2$, an even integer.

Substituting \eqref{eq:insert-bridge}--\eqref{eq:sign-collapse} into the
$p$th term of \eqref{eq:talla-ab} gives
\begin{align*}
&\frac{a2^{b-1}}{(b-1)!}
(-1)^{b-1}c_{a,r}
\left(
\frac{(-1)^p(2p)!(2^{2p+1}-1)}{2^{2p}\pi^{2p}}
\zeta(2p+1)
\right)\\
&\qquad
=\frac{a2^{b-1}}{(b-1)!}
(-1)^{b-1}c_{a,r}h_{2p+1}\\
&\qquad
=\frac{a2^{b-1}}{(b-1)!}
(-1)^{b-1}c_{a,r}h_{r+b-1}.
\end{align*}
Summing over $p$, equivalently over all nonzero powers $r$ of $g_a$,
we obtain
\[
K(a,b)
=\frac{a2^{b-1}}{(b-1)!}(-1)^{b-1}
\sum_r c_{a,r}h_{r+b-1},
\]
which is exactly the umbral evaluation
\[
\frac{a2^{b-1}}{(b-1)!}
\mathcal U_h\!\left((-h)^{b-1}g_a(h)\right).
\]
\end{proof}

\begin{remark}[Logical direction]
The proof of \cref{thm:umbral-recovered} does not compare coefficients of
odd zeta values.  Instead, the coefficient bridge
\eqref{eq:main-bridge} has already been established formally from the
generating function, and is substituted directly into the proven
integral identity \eqref{eq:talla-ab}.  No unproved linear independence
property of odd zeta values is required.
\end{remark}

\subsection{Consistency check: $K(5,3)$}

From
\(g_5(x)=\dfrac{3x+10x^3+2x^5}{15}\)
and \eqref{eq:umbral-recovered},
\(K(5,3) =\frac23\left(3h_3+10h_5+2h_7\right)\).
Using
\[
h_3=-\frac72\frac{\zeta(3)}{\pi^2},
\qquad
h_5=\frac{93}{2}\frac{\zeta(5)}{\pi^4},
\qquad
h_7=-\frac{5715}{4}\frac{\zeta(7)}{\pi^6},
\]
we recover
\begin{equation}
\label{eq:K53}
\boxed{
K(5,3)
=-7\frac{\zeta(3)}{\pi^2}
+310\frac{\zeta(5)}{\pi^4}
-1905\frac{\zeta(7)}{\pi^6}.
}
\end{equation}
This agrees with the explicit evaluations of Li and Chu
\cite{LiChu2024} and follows directly from our compact coefficient formula
\cite{TallaWaffo2026}.  In particular, the calculation fixes the sign of
the intermediate umbral combination without appealing to a secondary
example table.

\section{A Dirichlet--beta umbral companion}
\label[section]{sec:beta-umbral}

We now apply the same principle to the second hyperbolic integral family
\begin{equation}
\label{eq:beta-integral-family}
I_{m,n}^{\beta}
:=\int_0^\infty\frac{\tanh^m x}{x^n\cosh x}\,dx,
\qquad
m\ge n\ge1,
\qquad
m+n\equiv0\pmod2.
\end{equation}
The exponential decay of $1/\cosh x$ controls the integral at infinity,
while $\tanh^m x/x^n=O(x^{m-n})$ at the origin.  The Dirichlet beta
function itself and the arithmetic of its even values are discussed in
standard and recent sources such as
\cite{Apostol1976,WeissteinBeta,Zudilin2019,Kyrion2025}, as well as in
our earlier studies \cite{talla_waffo_integral_2025,
TallaWaffo2026arxiv2602.16761}.

The compact coefficient identity used below is the beta-type counterpart
of \eqref{eq:talla-original}; we proved it in our recent work
\cite{TallaWaffo2026BetaAnalogues}:
\begin{equation}
\label{eq:beta-compact}
\boxed{
I_{m,n}^{\beta}
=
(-1)^{(m-n)/2}
\sum_{p=\lceil n/2\rceil}^{(m+n)/2}
2^{2p}\binom{2p-1}{n-1}
\frac{\beta(2p)}{\pi^{2p-1}}
[u^{m+n-2p}]
\frac{u}{\sin u}(u\cot u)^m.
}
\end{equation}
We use \eqref{eq:beta-compact} only after proving the polynomial bridge
below independently.  This separation mirrors the logical strategy used
for the zeta-type integral family and avoids deriving a polynomial identity
from a termwise comparison of special values.

\subsection{The shifted Meixner-type polynomial family}

Define polynomials $q_m(x)$ by
\begin{equation}
\label{eq:q-generating-function}
\boxed{
\sum_{m=0}^{\infty}q_m(x)t^m
=
\frac{1}{\sqrt{1-t^2}}
\exp\!\bigl(2x\operatorname{artanh}t\bigr)
=
(1+t)^{x-1/2}(1-t)^{-x-1/2}.
}
\end{equation}
The standard Meixner generating function can be written as
\[
\left(1-\frac{t}{c}\right)^y(1-t)^{-y-\gamma}
=
\sum_{m\ge0}\frac{(\gamma)_m}{m!}
M_m(y;\gamma,c)t^m;
\]
see Meixner's original paper \cite{Meixner1934}, the modern treatments
\cite{Ismail2009,KoekoekLeskySwarttouw2010}, and
\cite[Sec.~18.23]{NIST:DLMF}.  For historical context on Meixner's
classification and its later reception, see also \cite{ButzerKoornwinder2019}.
Thus
\eqref{eq:q-generating-function} is obtained formally by taking
\(y=x-\frac12,\qquad \gamma=1,\qquad c=-1\).
Since $c=-1$ lies outside the usual positive orthogonality regime, we
use the Meixner identification only as a polynomial/generating-function
specialization, not as a claim of positive orthogonality.  This formal
specialization is also compatible with the general Sheffer viewpoint;
see \cite{Sheffer1939,RotaKahanerOdlyzko1973,Roman1984,NIST:DLMF}.

The first few members are \(q_0(x)=1\), \(q_1(x)=2x\), \(q_2(x)=2x^2+\frac12\), \(q_3(x)=\frac43x^3+\frac53x\), and \(q_4(x)=\frac23x^4+\frac73x^2+\frac38\).
Moreover,
\(q_m(-x)=(-1)^m q_m(x)\),
because replacing $(x,t)$ by $(-x,-t)$ in
\eqref{eq:q-generating-function} leaves the right-hand side unchanged.
Hence we may write
\begin{equation}
\label{eq:q-expansion}
q_m(x)=
\sum_{\substack{0\le r\le m\\r\equiv m\ (2)}}d_{m,r}x^r,
\qquad
d_{m,r}:=[x^r]q_m(x).
\end{equation}

\begin{lemma}[Fixed-power coefficient for $q_m$]
\label[lemma]{lem:q-fixed-power}
For $m\ge0$ and $0\le r\le m$,
\begin{equation}
\label{eq:q-fixed-power}
[x^r]q_m(x)
=
\frac{2^r}{r!}
[t^m]\frac{(\operatorname{artanh}t)^r}{\sqrt{1-t^2}}.
\end{equation}
\end{lemma}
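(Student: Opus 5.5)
The plan is to mirror the proof of \cref{lem:x-coefficient-artanh} directly, since the statement is a pure coefficient extraction from the generating function \eqref{eq:q-generating-function}. First I will write the right-hand side of \eqref{eq:q-generating-function} in its exponential form $(1-t^2)^{-1/2}\exp(2x\operatorname{artanh}t)$. Then I will expand the exponential as a power series in $x$:
\[
\frac{1}{\sqrt{1-t^2}}\exp\!\bigl(2x\operatorname{artanh}t\bigr)
=\sum_{r\ge0}\frac{2^r x^r}{r!}\,\frac{(\operatorname{artanh}t)^r}{\sqrt{1-t^2}}.
\]
This is an identity in the ring $\mathbb{Q}[x][[t]]$. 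It is legitimate because $\operatorname{artanh}t=t+O(t^3)$. As a result, $(\operatorname{artanh}t)^r/\sqrt{1-t^2}=O(t^r)$, so only the finitely many terms with $r\le m$ contribute to any fixed coefficient $[t^m]$.

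Next I will extract the coefficient $[x^r t^m]$ from both sides. On the left this gives $[x^r]q_m(x)$ by the definition \eqref{eq:q-generating-function}. On the right, for a fixed $r$, exactly one summand contains $x^r$, so extracting $[x^r]$ first and then $[t^m]$ yields $\frac{2^r}{r!}[t^m]\,(\operatorname{artanh}t)^r(1-t^2)^{-1/2}$, which is \eqref{eq:q-fixed-power}. Unlike the Mittag--Leffler case, there is no prefactor $\tfrac12$ in the generating function. This is why the constant here is $2^r/r!$ rather than $2^{r-1}/r!$.

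I do not expect any genuine obstacle. The only point needing care is that interchanging the coefficient extractions is justified formally, and this follows from the order bound above. As a sanity check I will compare with the listed $q_2(x)=2x^2+\tfrac12$. For $r=0$ the formula gives $[t^2](1-t^2)^{-1/2}=\tfrac12$. For $r=2$ it gives $\tfrac{4}{2}[t^2]t^2=2$. Both agree with $q_2$.
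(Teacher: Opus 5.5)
Your proposal is correct and follows the same route as the paper: expand $\exp(2x\operatorname{artanh}t)$ in powers of $x$ inside the generating function \eqref{eq:q-generating-function} and extract $[x^r t^m]$. Your remark that $(\operatorname{artanh}t)^r/\sqrt{1-t^2}=O(t^r)$ justifies the formal expansion, which the paper leaves implicit, and your $q_2$ check is consistent.
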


\begin{proof}
Expand the exponential in \eqref{eq:q-generating-function} and extract
$[x^rt^m]$.  This gives \eqref{eq:q-fixed-power} directly.
\end{proof}

The additional factor $(1-t^2)^{-1/2}$ changes the Lagrange conversion
slightly.  A formal residue substitution is particularly efficient
here.  Such residue forms of Lagrange inversion are standard; see
\cite{Comtet1974,FlajoletSedgewick2009,Gessel2016,SuryaWarnke2023}.

\begin{lemma}[Hyperbolic beta-kernel conversion]
\label[lemma]{lem:q-hyperbolic-conversion}
For $m\ge r\ge0$,
\begin{equation}
\label{eq:q-hyperbolic-conversion}
[t^m]\frac{(\operatorname{artanh}t)^r}{\sqrt{1-t^2}}
=
[w^{m-r}]\operatorname{sech}w\,(w\coth w)^{m+1}.
\end{equation}
\end{lemma}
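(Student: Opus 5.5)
We will prove the identity by a formal residue substitution $t=\tanh w$. First write the left side as a formal residue:
\[
[t^m]\frac{(\operatorname{artanh}t)^r}{\sqrt{1-t^2}}
=\operatorname{Res}_{t=0}\frac{(\operatorname{artanh}t)^r}{t^{m+1}\sqrt{1-t^2}}.
\]
The substitution $t=\tanh w$ is legitimate in formal Laurent series, since $\tanh w=w+O(w^3)$ has order exactly one. The change-of-variables rule then gives $\operatorname{Res}_t F(t)=\operatorname{Res}_w F(\tanh w)\,(\tanh w)'$.

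Next we compute each factor under the substitution:
\begin{itemize}
\item $\operatorname{artanh}(\tanh w)=w$;
\item $dt/dw=\operatorname{sech}^2 w$;
\item $\sqrt{1-\tanh^2 w}=\operatorname{sech}w$, taking the branch with constant term $1$. This is the only formal square root with that normalization, and it matches the series $(1-t^2)^{-1/2}=1+\tfrac12t^2+\cdots$ used in the generating function.
\end{itemize}
The integrand therefore becomes
\[
\frac{w^r}{\tanh^{m+1}w}\cdot\frac{\operatorname{sech}^2 w}{\operatorname{sech} w}
=w^r\,\operatorname{sech}w\,\frac{1}{\tanh^{m+1}w}.
\]

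We then normalize the Laurent factor. Write $1/\tanh^{m+1}w=w^{-(m+1)}(w\coth w)^{m+1}$. Since $w\coth w$ is a genuine power series with constant term $1$, the residue becomes
\[
\operatorname{Res}_w\, w^{r-m-1}\operatorname{sech}w\,(w\coth w)^{m+1}
=[w^{m-r}]\operatorname{sech}w\,(w\coth w)^{m+1},
\]
which is the claimed right-hand side.

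The only step needing care is the change-of-variables rule for residues under a formal substitution of order one. We will cite it from the standard residue form of Lagrange inversion (\cite{Gessel2016}, or \cite[Chap.~III]{Comtet1974}).

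As a sanity check, take $r=0$, $m=0$: both sides equal $1$. Take $m=2$, $r=0$: the left side is $\tfrac12$. On the right, $[w^2]\operatorname{sech}w\,(w\coth w)^3=-\tfrac12+3\cdot\tfrac13=\tfrac12$, so the two sides agree.
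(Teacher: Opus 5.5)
Your proposal is correct and is essentially the paper's own proof. The paper also writes the coefficient as a formal residue, substitutes $t=\tanh w$ with $dt=\operatorname{sech}^2 w\,dw$ and $\sqrt{1-\tanh^2 w}=\operatorname{sech}w$, and reads off $[w^{m-r}]\operatorname{sech}w\,(w\coth w)^{m+1}$; your extra remarks on the square-root branch and the residue change-of-variables rule, and your sanity checks, are accurate.
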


\begin{proof}
Put $t=\tanh w$, so that $w=\operatorname{artanh}t$ and
$dt=\operatorname{sech}^2w\,dw$.  Formal residue substitution gives
\begin{align*}
[t^m]\frac{(\operatorname{artanh}t)^r}{\sqrt{1-t^2}}
&=\operatorname*{Res}_{t=0}
\frac{(\operatorname{artanh}t)^r}{\sqrt{1-t^2}}
\frac{dt}{t^{m+1}}\\
&=\operatorname*{Res}_{w=0}
\frac{w^r\operatorname{sech}w}{\tanh^{m+1}w}\,dw\\
&=[w^{m-r}]\operatorname{sech}w\,(w\coth w)^{m+1}.
\end{align*}
No analytic convergence assertion is needed: the calculation takes
place in formal Laurent series around the origin.
\end{proof}

Now use
\(\operatorname{sech}(iu)=\sec u, \qquad (iu)\coth(iu)=u\cot u\),
and the elementary identity
\begin{equation}
\label{eq:beta-kernel-factorization}
\sec u\,(u\cot u)^{m+1}
=
\frac{u}{\sin u}(u\cot u)^m.
\end{equation}
Both sides of \eqref{eq:beta-kernel-factorization} are even formal
series.

\begin{theorem}[Dirichlet--beta coefficient bridge]
\label[theorem]{thm:beta-coefficient-bridge}
For $m\ge0$ and $0\le r\le m$ with $r\equiv m\pmod2$,
\begin{equation}
\label{eq:beta-coefficient-bridge}
\boxed{
[u^{m-r}]\frac{u}{\sin u}(u\cot u)^m
=
(-1)^{(m-r)/2}\frac{r!}{2^r}[x^r]q_m(x).
}
\end{equation}
Equivalently,
\begin{equation}
\label{eq:q-beta-coefficient-formula}
\boxed{
q_m(x)
=
\sum_{j=0}^{\lfloor m/2\rfloor}
(-1)^j
\frac{2^{m-2j}}{(m-2j)!}
[u^{2j}]\frac{u}{\sin u}(u\cot u)^m
x^{m-2j}.
}
\end{equation}
\end{theorem}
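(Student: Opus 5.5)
The proof follows the same route as \cref{thm:main-mittag-leffler}, but uses \cref{lem:q-fixed-power} and \cref{lem:q-hyperbolic-conversion} instead of the Lagrange--B\"urmann pair. Fix $m\ge0$ and $0\le r\le m$ with $m-r=2j$ even.

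\textbf{Step 1: hyperbolic form of the coefficients.} Combining the two lemmas gives
\[
[x^r]q_m(x)=\frac{2^r}{r!}\,[w^{2j}]\operatorname{sech}w\,(w\coth w)^{m+1}.
\]
This is the beta analogue of \cref{prop:coth-bridge}. There is no factor $r/m$ here, because the residue substitution absorbs the Jacobian $\operatorname{sech}^2w$ directly.

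\textbf{Step 2: passage to the trigonometric kernel.} The series $F(w)=\operatorname{sech}w\,(w\coth w)^{m+1}$ is even. Substituting $w=iu$ (formally, $w\mapsto iu$ on even power series) gives $F(iu)=\sec u\,(u\cot u)^{m+1}$, using $\operatorname{sech}(iu)=\sec u$ and \eqref{eq:coth-cot-analytic-continuation}. Hence
\[
[w^{2j}]F(w)=(-1)^j[u^{2j}]\sec u\,(u\cot u)^{m+1}.
\]
By \eqref{eq:beta-kernel-factorization}, which holds because $u\cot u\cdot\sec u=u/\sin u$, the right-hand side equals $(-1)^j[u^{2j}]\frac{u}{\sin u}(u\cot u)^m$.

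\textbf{Step 3: the bridge.} Putting the two steps together,
\[
[x^r]q_m(x)=(-1)^j\frac{2^r}{r!}[u^{m-r}]\frac{u}{\sin u}(u\cot u)^m.
\]
Multiplying by $(-1)^j r!/2^r$ gives \eqref{eq:beta-coefficient-bridge}, since $(-1)^{-j}=(-1)^{(m-r)/2}$.

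\textbf{Step 4: the polynomial form.} For \eqref{eq:q-beta-coefficient-formula}, set $r=m-2j$ and sum over $0\le j\le\lfloor m/2\rfloor$. By the parity $q_m(-x)=(-1)^mq_m(x)$, the coefficients $d_{m,r}$ with $r\not\equiv m\pmod 2$ vanish, so these are the only terms of $q_m$.

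\textbf{Main obstacle.} Nothing here is hard. The only point needing care is Step 2: the substitution $w=iu$ must be justified as a formal operation on power series, namely sending $w^{2j}$ to $(-1)^ju^{2j}$. This requires the product identity $\operatorname{sech}(iu)(iu\coth iu)^{m+1}=\sec u(u\cot u)^{m+1}$ as formal series. That identity holds because substitution is a ring homomorphism and each factor transforms correctly. As a check, the case $m=2$ should give $q_2=2x^2+\tfrac12$.
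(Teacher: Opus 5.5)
Your proposal is correct and follows the paper's proof essentially step for step: you combine \cref{lem:q-fixed-power} with \cref{lem:q-hyperbolic-conversion}, substitute $w=iu$ to pick up the sign $(-1)^j$, apply \eqref{eq:beta-kernel-factorization}, and then sum over $r=m-2j$, using parity to see that these are all the terms of $q_m$. Your explicit justification of $w\mapsto iu$ as a formal ring homomorphism on even series fills in a point the paper leaves implicit. Your $m=2$ check is also consistent: $[u^2]\frac{u}{\sin u}(u\cot u)^2=-\tfrac12$ gives $[x^0]q_2=\tfrac12$.
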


\begin{proof}
Combining \eqref{eq:q-fixed-power} and
\eqref{eq:q-hyperbolic-conversion} yields
\[
[x^r]q_m(x)
=\frac{2^r}{r!}
[w^{m-r}]\operatorname{sech}w\,(w\coth w)^{m+1}.
\]
If $m-r=2j$, substituting $w=iu$ gives
\[
[w^{m-r}]\operatorname{sech}w\,(w\coth w)^{m+1}
=(-1)^j
[u^{m-r}]\sec u\,(u\cot u)^{m+1}.
\]
Use \eqref{eq:beta-kernel-factorization} and rearrange to obtain
\eqref{eq:beta-coefficient-bridge}.  Setting $r=m-2j$ and summing over
all admissible powers proves \eqref{eq:q-beta-coefficient-formula}.
\end{proof}

Thus the beta kernel has exactly the same status for $q_m$ that the
cotangent kernel has for the Mittag--Leffler polynomials in
\cref{thm:main-mittag-leffler}.  The first coefficient dictionary identifies
\([u^{n-r}](u\cot u)^n\) with \([x^r]g_n(x)\), while the second identifies
\([u^{m-r}]\frac{u}{\sin u}(u\cot u)^m\) with \([x^r]q_m(x)\).

\subsection{Recovery of the Dirichlet--beta umbral form}

For positive even integers $k$, define
\begin{equation}
\label{eq:beta-umbra-values}
b_k
:=
(-1)^{k/2}(k-1)!\frac{\beta(k)}{\pi^{k-1}}.
\end{equation}
As in \cref{sec:background}, we make the umbral notation formal through
a linear evaluation map in the sense of \cite{Roman1984}.  Let
$\mathcal U_\beta$ be defined on the even monomials that occur below by
\begin{equation}
\label{eq:beta-umbra-evaluation}
\mathcal U_\beta(B^k)=b_k
\qquad(k\ge2,\ k\ \text{even}).
\end{equation}
The parity assumptions guarantee that no odd beta umbra are needed.

\begin{theorem}[Umbral shifted-Meixner form of $I_{m,n}^{\beta}$]
\label[theorem]{thm:beta-umbral}
Let $m\ge n\ge1$ and $m+n\equiv0\pmod2$.  Then
\begin{equation}
\label{eq:beta-umbral}
\boxed{
I_{m,n}^{\beta}
=
\int_0^\infty\frac{\tanh^m x}{x^n\cosh x}\,dx
=
\frac{2^n}{(n-1)!}
\mathcal U_\beta\!\left((-B)^nq_m(B)\right).
}
\end{equation}
\end{theorem}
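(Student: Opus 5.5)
The plan is to copy the proof of \cref{thm:umbral-recovered}, with the beta data in place of the zeta data. The starting point is the compact identity \eqref{eq:beta-compact}. Into it I substitute the coefficient bridge \eqref{eq:beta-coefficient-bridge}, which \cref{thm:beta-coefficient-bridge} established from the generating function \eqref{eq:q-generating-function} alone. No comparison of $\beta$-values is involved. For each summation index $p$ in \eqref{eq:beta-compact} I set
\[
r:=2p-n,\qquad\text{so that}\qquad m+n-2p=m-r,\qquad 2p=r+n .
\]
As $p$ runs from $\lceil n/2\rceil$ to $(m+n)/2$, the index $r$ runs over exactly the integers $0\le r\le m$ with $r\equiv n\equiv m\pmod 2$. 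These are precisely the exponents appearing in \eqref{eq:q-expansion}.

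Next, I will replace the kernel coefficient in the $p$th term using \eqref{eq:beta-coefficient-bridge}:
\[
[u^{m-r}]\frac{u}{\sin u}(u\cot u)^m=(-1)^{(m-r)/2}\frac{r!}{2^r}\,d_{m,r}.
\]
Three elementary collapses then remain to be checked.
\begin{enumerate}
\item Since $r=(2p-1)-(n-1)$, the binomial factor gives $\binom{2p-1}{n-1}r!=\frac{(2p-1)!}{(n-1)!}$.
\item The powers of two give $2^{2p}/2^r=2^n$.
\item For the signs, $(-1)^{(m-n)/2+(m-r)/2}=(-1)^{m-p}$, and since $m\equiv n\pmod2$ this equals $(-1)^{n}(-1)^p$.
\end{enumerate}
After these substitutions the $p$th term becomes
\[
\frac{2^n}{(n-1)!}(-1)^n d_{m,r}\,(-1)^p(2p-1)!\frac{\beta(2p)}{\pi^{2p-1}}
=\frac{2^n}{(n-1)!}(-1)^n d_{m,r}\,b_{r+n},
\]
by the definition \eqref{eq:beta-umbra-values} with $k=2p=r+n$.

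Summing over $p$, equivalently over all admissible $r$, gives $\frac{2^n}{(n-1)!}(-1)^n\sum_r d_{m,r}b_{r+n}$. By linearity of $\mathcal U_\beta$ and \eqref{eq:beta-umbra-evaluation}, this is $\frac{2^n}{(n-1)!}\mathcal U_\beta\bigl((-B)^nq_m(B)\bigr)$, which is the claim. Here I must confirm that every umbral exponent $r+n$ is even and at least $2$, so that $\mathcal U_\beta$ is defined on it. Evenness holds because $r\equiv n\pmod 2$, and $r+n\ge n\ge1$ together with evenness forces $r+n\ge2$.

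I expect the main obstacle to be the index bookkeeping rather than any analytic step. One point is the endpoint $r=0$, which occurs exactly when $n$ (hence $m$) is even and $p=n/2$. This term corresponds to the constant term $d_{m,0}$ of $q_m$, a feature absent in the Mittag--Leffler case, where $g_a(0)=0$. I must check that it matches the term $p=\lceil n/2\rceil$ of \eqref{eq:beta-compact}, with $\binom{n-1}{n-1}=1$. I also need the sign reduction to use $m\equiv n$ correctly. Finally, since $d_{m,r}=0$ when $r\not\equiv m\pmod2$, the sum over $r$ is the full umbral evaluation, with no missing or extra terms.
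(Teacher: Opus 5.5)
Your proposal is correct and follows the paper's own proof step for step. You substitute $r=2p-n$ in \eqref{eq:beta-compact}, insert the bridge \eqref{eq:beta-coefficient-bridge}, collapse the binomial, power-of-two and sign factors, and recognize $b_{2p}=b_{n+r}$. Your extra checks are details the paper leaves implicit: that $r$ runs exactly over the exponents of $q_m$, that the $r=0$ endpoint is handled, and that $n+r$ is even and at least $2$.
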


\begin{proof}
Write $q_m(x)=\sum_r d_{m,r}x^r$ as in
\eqref{eq:q-expansion}.  In the $p$th term of
\eqref{eq:beta-compact}, put
\(r:=2p-n\).
Then
\(2p=n+r, \qquad m+n-2p=m-r\).
By \eqref{eq:beta-coefficient-bridge},
\[
[u^{m-r}]\frac{u}{\sin u}(u\cot u)^m
=
(-1)^{(m-r)/2}\frac{r!}{2^r}d_{m,r}.
\]
Moreover,
\(\frac{2^{2p}}{2^r}=2^n, \qquad \binom{2p-1}{n-1}r! =\frac{(2p-1)!}{(n-1)!}\).
The total sign is
\((-1)^{(m-n)/2+(m-r)/2} =(-1)^{m-p} =(-1)^n(-1)^p\),
where the last equality uses $m\equiv n\pmod2$.  Consequently the
$p$th summand becomes
\[
\frac{2^n}{(n-1)!}
(-1)^n d_{m,r}
\left(
(-1)^p(2p-1)!\frac{\beta(2p)}{\pi^{2p-1}}
\right).
\]
By \eqref{eq:beta-umbra-values}, the parenthesis is $b_{2p}=b_{n+r}$.
Summing over $p$, equivalently over the admissible powers $r$ of
$q_m$, gives
\[
I_{m,n}^{\beta}
=\frac{2^n}{(n-1)!}(-1)^n
\sum_r d_{m,r}b_{n+r},
\]
which is precisely \eqref{eq:beta-umbral}.
\end{proof}

\begin{remark}[Logical direction in the beta case]
As in \cref{thm:umbral-recovered}, no linear independence statement for
special values is used.  The bridge
\eqref{eq:beta-coefficient-bridge} is derived solely from the generating
function \eqref{eq:q-generating-function}.  Only after that independent
proof is complete is the bridge inserted into the compact integral
identity \eqref{eq:beta-compact}.  Thus the beta umbral formula is a
formal consequence of two separately established identities rather
than a termwise comparison of the numbers
$\beta(2),\beta(4),\ldots$.
\end{remark}

\subsection{Consistency checks}

For $m=n=1$, we have $q_1(x)=2x$, so \(\frac{2^1}{0!}\mathcal U_\beta\!\left((-B)q_1(B)\right)=-4\mathcal U_\beta(B^2)=4\frac{\beta(2)}{\pi}\).
Hence
\begin{equation}
\label{eq:beta-check-11}
\boxed{
\int_0^\infty\frac{\tanh x}{x\cosh x}\,dx
=4\frac{\beta(2)}{\pi}.
}
\end{equation}
The value $\beta(2)$ is Catalan's constant; see
\cite{WeissteinBeta,Zudilin2019}.

For $m=n=2$, the polynomial $q_2(x)=2x^2+\tfrac12$ gives
\begin{align*}
I_{2,2}^{\beta}
&=4\mathcal U_\beta\!\left(B^2\left(2B^2+\frac12\right)\right)\\
&=8b_4+2b_2\\
&=48\frac{\beta(4)}{\pi^3}
-2\frac{\beta(2)}{\pi}.
\end{align*}
Therefore
\begin{equation}
\label{eq:beta-check-22}
\boxed{
\int_0^\infty\frac{\tanh^2x}{x^2\cosh x}\,dx
=48\frac{\beta(4)}{\pi^3}
-2\frac{\beta(2)}{\pi}.
}
\end{equation}
These two checks also follow immediately from
\eqref{eq:beta-compact} and verify the normalization and signs of
\eqref{eq:beta-umbral}.

\section{Concluding remarks}
\label[section]{sec:conclusion}

The identity
\[
 g_n(x)
 =\frac1n
 \sum_{j=0}^{\lfloor(n-1)/2\rfloor}
 (-1)^j
 \frac{2^{n-2j-1}}{(n-2j-1)!}
 [u^{2j}](u\cot u)^n x^{n-2j}
\]
provides a direct dictionary between two coefficient languages that
arise naturally in hyperbolic tangent integrals.  On one side are the
ordinary coefficients of the classical Mittag--Leffler polynomial
$g_n$; on the other are the even coefficients of the elementary kernel
$(u\cot u)^n$.

The comparison of the two integral formulae suggests the dictionary,
but the actual proof is intrinsic to the Mittag--Leffler generating
function.  Lagrange--B\"urmann inversion converts powers of
$\operatorname{artanh}t$ into coefficients of $(w\coth w)^n$, and the
substitution $w=iu$ converts these into coefficients of $(u\cot u)^n$.
The same bridge then transforms our direct tangent-integral formula
back into the umbral representation.  In this sense the
cotangent extraction is not merely another way to compute the same
integrals: it is an explicit coefficient model for the Mittag--Leffler
polynomials themselves.

The Dirichlet--beta companion of \cref{sec:beta-umbral} shows that the
coefficient-dictionary mechanism is not confined to the
Mittag--Leffler family.  The additional factor $u/\sin u$ is reflected
on the polynomial side by the additional factor
$(1-t^2)^{-1/2}$ in \eqref{eq:q-generating-function}; after formal
inversion this becomes the factor $\operatorname{sech}w$ in
\eqref{eq:q-hyperbolic-conversion}.  The resulting family is a shifted
$c=-1$, $\beta=1$ specialization of the Meixner generating function,
and \cref{thm:beta-umbral} places the even Dirichlet beta values into an
umbral formula parallel to the odd-zeta Mittag--Leffler formula.

Several natural extensions remain.  Since the coefficients of
$(u\cot u)^n$ and of $\dfrac{u}{\sin u}(u\cot u)^m$ admit descriptions
through Bernoulli- and Euler-type numbers, the two polynomial bridges
can be converted into finite special-number formulae.  More generally,
it is natural to ask which other Sheffer or classical hypergeometric
families arise when elementary factors are inserted into the inverse
hyperbolic generating kernel; compare the general frameworks in
\cite{Sheffer1939,RotaKahanerOdlyzko1973,Roman1984,Meixner1934,
Ismail2009,NIST:DLMF,KoekoekLeskySwarttouw2010}.

\section*{Acknowledgments}
The author acknowledges the use of an AI language model for language and
presentation assistance and for exploratory algebraic checks.  

\printbibliography

\end{document}